\documentclass{article}
  \usepackage[left=1.25in,right=1.25in,top=1.2in,bottom=1.2in]{geometry}
  \usepackage{xcolor}
\usepackage{amsmath,amssymb,amsthm}
\usepackage{microtype}
\usepackage{hyperref}
\hypersetup{
  pdftitle={Algebraic Vector Fields on S2 are 2-generated},
  pdfauthor={Yiyang Jiang and Xudong Chen}}
  \usepackage[style=numeric-comp,hyperref=true,doi=false,url=false,isbn=false,giveninits=true,sorting=none,maxnames=99]{biblatex}
  \DeclareFieldFormat[article,online,inbook,incollection,inproceedings,thesis]{title}{\mkbibemph{#1}}

\DeclareFieldFormat{eprint:arxiv}{%
  \ifhyperref
    {\href{https://arxiv.org/abs/#1}{arXiv\addcolon\addnbspace\mbox{#1}}}
    {arXiv\addcolon\addnbspace\mbox{#1}}}
    
\newcommand{\R}{\mathbb R}
\newcommand{\X}{\mathfrak X_{\mathrm{alg}}}
\newcommand{\Lie}{\operatorname{Lie}}
\newcommand{\dv}{\operatorname{div}}
\newcommand{\curl}{\operatorname{curl}}
\newcommand{\sph}{S^2}

\newcommand{\OO}{\mathcal O}
\newcommand{\LL}{\mathcal L}
\newcommand{\WW}{\mathcal W}
\newcommand{\ad}{\operatorname{ad}}

\newtheorem{lemma}{Lemma}
\newtheorem{remark}{Remark}
\newtheorem{proposition}[lemma]{Proposition}
\newtheorem{controlcor}{Corollary}

\newtheorem{mainthm}{Theorem}

\title{Algebraic Vector Fields on $S^2$ are $2$-generated}
  \author{Yiyang Jiang$^*$ \quad and \quad Xudong Chen\footnote{Y. Jiang and X. Chen are with the Department of Electrical \& Systems Engineering, Washington University in St.~Louis, St.~Louis, MO 63130, USA. Emails: {\small\texttt{j.yiyang@wustl.edu, cxudong@wustl.edu}}. Corresponding author: Y. Jiang.}}
  \date{}
  
\begin{document}
\maketitle

\begin{abstract}
We exhibit two vector fields on $S^2$, which generate the entire Lie algebra of algebraic vector fields. 
Compositions of the flows associated with these two vector fields enable us to approximate every diffeomorphism of $S^2$ isotopic to the identity in the $\mathrm{C}^r$-sense for any $r\geq 0$.
\end{abstract}

\section{Introduction and main results}\label{sec:intro}

Let $\sph:=\{x\in\R^3\mid  \|x\|_2=1\}$ be the unit sphere embedded in $\R^3$.  Consider the following control-linear system:
\begin{equation}\label{eq:system}
 \dot x(t)= \sum_{i = 1}^m u_i(t)W_i(x(t)),\qquad x\in\sph,
\end{equation}
where each $W_i$ is a smooth vector field on $S^2$ and each $u_i(t)$ is a scalar control input. 
We address (approximate) controllability of system~\eqref{eq:system} on the identity component of the group of diffeomorphisms of $S^2$.
The problem has been addressed in~\cite{AS22}, where the authors have shown that approximate controllability can be achieved if one is allowed to use seven vector fields (i.e., $m = 7$).  
In this paper, we sharpen their result by showing that one only needs two vector fields to attain the goal. 

We now formulate the main result precisely. 
First, a vector field on $S^2$ is said to be {\it algebraic} if it is obtained by restricting a polynomial vector field on $\R^3$ to $S^2$.  
We use $\X(\sph)$ to denote the Lie algebra (over $\R$) of these fields.
Given algebraic vector fields $U_1,\ldots,U_m$ on $S^2$, we use $\Lie_{\R}\{U_1,\ldots,U_m\}$ to denote the Lie subalgebra of $\X(\sph)$ generated by $U_1,\ldots, U_m$. 
Next, to a given polynomial $f\in\R[x_1,x_2,x_3]$, we associate two vector fields on $S^2$ as follows: 
\begin{equation}\label{eq:fields}
 \begin{aligned}
 X_f&:=x\times\nabla f,\\
 G_f&:=\nabla f-(x\cdot\nabla f)x,
 \end{aligned}
\end{equation}
where ``$\times$'' and ``$\cdot$'' are the cross-product and the dot-product, respectively, and $\nabla f$ is the gradient of~$f$.
Note, in particular, that $G_f$ is the gradient of~$f$ (with respect to the metric induced by the standard metric in $\R^3$). 
With the preliminaries above, we now state the first main result of the paper.

\begin{mainthm}\label{thm:main}
Let
\begin{equation}\label{eq:generators}
 \begin{aligned}
 W_1&:=X_{x_3},\\
 W_2&:=X_{x_1}+X_{x_1^2-x_2^2}+G_{x_3}.
 \end{aligned}
\end{equation}
Then $$\Lie_{\R}\{W_1,W_2\}=\X(\sph).$$
\end{mainthm}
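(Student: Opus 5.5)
Every algebraic field on $S^2$ decomposes as $X_f+G_g$, so we will exploit this splitting together with the bracket relations of the two families. For polynomials $f,g$ (restricted to $S^2$), the standard identities are
\[
[X_f,X_g]=X_{\{f,g\}},\qquad [X_f,G_g]=G_{\{f,g\}},\qquad [G_f,G_g]=X_{\{\!\{f,g\}\!\}},
\]
up to signs fixed by the conventions in~\eqref{eq:fields}. Here $\{f,g\}=x\cdot(\nabla f\times\nabla g)$ is the Lie--Poisson bracket, and the last bracket is again a Hamiltonian field of some polynomial built from $f$ and $g$. We will also use $\X(\sph)=\{X_f+G_g\}$ with $f,g$ ranging over polynomials. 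This holds because the tangent fields $x\times e_i$ and $e_i-x_ix$ generate the tangent bundle over the polynomial ring, and a polynomial combination $\sum p_i(x\times e_i)+\sum q_i(e_i-x_ix)$ can be rewritten via the Hodge-type decomposition on $S^2$ in terms of the spherical Laplacian. Accordingly, the target reduces to showing that $\LL:=\Lie_\R\{W_1,W_2\}$ contains $X_f$ and $G_f$ for every polynomial $f$. It suffices to take $f$ a spherical harmonic of each degree $\ell$, since these span the polynomial functions restricted to $S^2$.

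\textbf{Step 1: separate $W_2$ into its pieces using $\ad_{W_1}$.} Since $W_1=X_{x_3}$ generates rotations about the $x_3$-axis, $\ad_{W_1}$ acts on $X_f$ and $G_f$ through $f\mapsto\{x_3,f\}=x_1\partial_2 f-x_2\partial_1 f$. This operator is the derivative along the rotation, and on functions of azimuthal frequency $m$ it has eigenvalues $\pm im$. The three pieces of $W_2$ carry frequencies $1$, $2$ and $0$ respectively. The frequency-zero piece $G_{x_3}$ is killed by $\ad_{W_1}$. We then apply the real polynomial $p(\ad_{W_1})=\ad_{W_1}^2(\ad_{W_1}^2+1)$: it vanishes on frequencies $0$ and $1$ and acts as a nonzero scalar on frequency $2$, so it isolates $X_{x_1^2-x_2^2}$. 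In the same way, $\ad_{W_1}^2(\ad_{W_1}^2+4)$ isolates $X_{x_1}$, and subtracting these two leaves $G_{x_3}$. Applying $\ad_{W_1}$ once more yields $X_{x_2}$ and $X_{x_1x_2}$. Hence $\LL$ contains $X_{x_1},X_{x_2},X_{x_3}$, which span $\mathfrak{so}(3)$, together with $G_{x_3}$ and $X_{x_1^2-x_2^2}$.

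\textbf{Step 2: climb the degrees.} Since $\mathfrak{so}(3)\subset\LL$ and $\ad$ of the rotations acts on $f$ through the rotation action, the set $\{f: X_f\in\LL\}$ is an $SO(3)$-invariant subspace, and likewise for $\{g:G_g\in\LL\}$. Each such set is therefore a sum of full spherical-harmonic spaces $\mathcal H_\ell$, because each $\mathcal H_\ell$ is irreducible and occurs once. Consequently $G_{x_3}\in\LL$ gives all of $G_{\mathcal H_1}$, and $X_{x_1^2-x_2^2}$ gives all of $X_{\mathcal H_2}$. Brackets then generate higher degrees. The relation $[X_{\mathcal H_2},G_{\mathcal H_1}]=G_{\{\mathcal H_2,\mathcal H_1\}}$ produces degree-$2$ gradient fields; the components in $\mathcal H_0$ vanish on $S^2$ since $G_1=0$. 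Similarly $[X_{\mathcal H_2},X_{\mathcal H_2}]\subset X_{\mathcal H_1+\mathcal H_3}$. In general, the Poisson bracket of $\mathcal H_a$ and $\mathcal H_b$ has a nonzero component in $\mathcal H_{a+b-1}$. To see this, it is enough to check a single highest-weight pair such as $f=(x_1+ix_2)^a$ and $g=x_3(x_1+ix_2)^{b-1}$. Their bracket is a nonzero multiple of $(x_1+ix_2)^{a+b-1}$, which is harmonic. By induction with $a=2$, the set $\{f:X_f\in\LL\}$ contains $\mathcal H_\ell$ for every $\ell\ge1$; the gradient fields $G_{\mathcal H_\ell}$ follow in the same way from $[X_{\mathcal H_2},G_{\mathcal H_{\ell-1}}]$. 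Because $X_1=G_1=0$, this exhausts $\X(\sph)$.

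\textbf{Main obstacle.} The delicate point is the nondegeneracy check that the $\mathcal H_{a+b-1}$-component of the brackets is nonzero. This matters most for the mixed $G$-brackets, where both the sign conventions and the precise formula for $[X_f,G_g]$ have to be verified directly from~\eqref{eq:fields}. A second, smaller issue is to confirm the decomposition $\X(\sph)=\{X_f+G_g\}$ with polynomial $f,g$. This requires solving $\Delta_{S^2}$ on polynomials, which is possible because the spherical Laplacian acts invertibly on $\bigoplus_{\ell\ge1}\mathcal H_\ell$.
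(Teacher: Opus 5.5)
Your Step~1 and the Hamiltonian half of Step~2 are sound and essentially match the paper: you separate the terms of $W_2$ with polynomials in $\ad_{W_1}^2$, use $\mathrm{SO}(3)$-irreducibility of each $H_k$, and use $\{\zeta^2,x_3\zeta^{k-1}\}=2i\zeta^{k+1}$. The gap is in the gradient half, which rests on the ``standard identity'' $[X_f,G_g]=G_{\{f,g\}}$. That identity holds for $f\in H_1$, because the flow of $X_f$ then consists of isometries; this is the paper's~\eqref{eq:XG}. It is false for $f$ of degree $\ge 2$. In that case the bracket is not a gradient field at all, so the problem is not a sign convention. For example, a direct computation gives
\[
[X_{x_1x_2},G_{x_3}]=\tfrac23\,X_{x_1x_2x_3}+\tfrac13\,G_{x_1^2-x_2^2},
\]
whereas $\{x_1x_2,x_3\}=x_1^2-x_2^2$. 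Your claim that $[G_f,G_g]$ is always Hamiltonian is also false: for $f\in H_a$, $g\in H_b$ its divergence is $(a(a+1)-b(b+1))\,G_f(g)$. You never use that claim, though. As a result, the assertion that the fields $G_{H_\ell}$ ``follow in the same way from $[X_{H_2},G_{H_{\ell-1}}]$'' is unsupported. Your highest-weight check on $\{f,g\}$ says nothing, by itself, about whether such a bracket puts a nonzero element of $G_{H_\ell}$ into $\LL$.

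The missing idea is a way to extract the gradient part of a mixed bracket. You obtain every Hamiltonian field before turning to gradients, so write $[X_f,G_g]=X_p+G_q$ using your decomposition. Then $X_p\in\LL$, hence $G_q\in\LL$. The function $q$ is pinned down by the divergence: from $\dv[U,V]=U(\dv V)-V(\dv U)$ you get $\Delta_{\sph}q=X_f(\Delta_{\sph}g)=-b(b+1)\{f,g\}$ for $g\in H_b$. So the $H_c$-component of $q$ is $\tfrac{b(b+1)}{c(c+1)}$ times the $H_c$-component of $\{f,g\}$. Only at this point does your highest-weight computation produce a nonzero element of $G_{H_\ell}\cap\LL$. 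The paper runs exactly this argument with $f=\zeta^k$ and $g=x_3$: the gradient part of $[X_{\zeta^k},G_{x_3}]$ is $\tfrac{2i}{k+1}G_{\zeta^k}\neq0$. That choice needs only $G_{H_1}$ and avoids any induction on the degree of the gradient fields.
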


The problem of whether the Lie algebra of algebraic vector fields on a real, smooth manifold is finitely generated by complete vector fields has recently been addressed in~\cite{Real}. In that work, we have shown that, for any real, connected, semi-simple matrix group, its Lie algebra of algebraic vector fields can be generated by three complete vector fields. The same problem, but for complex manifolds $M$ (with the requirement that the generating vector fields are $\mathbb{C}$-complete), has been addressed in the literature. 
For $\mathbb{C}^d$ with $d\ge2$, it has been shown~\cite{BP26} that $\X(\mathbb{C}^d)$ is $2$-generated. 
For a Danielewski surface $M$ of the form $xy = p(z)$, where p(z) is a polynomial with simple zeros, it has been shown~\cite{And26} that $\X(M)$ is $6$-generated. 
Finally, for a broader class of classical complex Lie groups $G$, we have shown in~\cite{Complex} that $\X(G)$ is $3$-generated.  

Given an arbitrary smooth vector field $U$ on $S^2$, let $\Phi_U^t$ be the associated flow. 
Given the two vector fields $W_1$ and $W_2$ in Theorem~\ref{thm:main}, let $\WW$ be the group generated by the flows $\Phi^t_{W_i}$ for $i = 1,2$ and for $t\in \R$, i.e., $\WW$ comprises the following  finite compositions:
\begin{equation}\label{eq:finitecomposition}
 \Phi_{W_{i_N}}^{t_N}\circ\cdots\circ\Phi_{W_{i_1}}^{t_1}, \qquad N\geq1,\quad i_j\in\{1,2\},\quad t_j\in\R.
\end{equation}
Note that every diffeomorphism in $\WW$ can be realized by system~\eqref{eq:system} with $m = 2$; indeed, given the composition in~\eqref{eq:finitecomposition}, we simply let $u_{i_j}(t)=\operatorname{sgn}(t_j)$ and the other control input be zero for the time period $[\sum_{s=1}^{j-1}|t_s|, \sum_{s=1}^{j}|t_s|)$, for any $j=1,\dots,N$.
The following result is a corollary of Theorem~\ref{thm:main}: 

\begin{controlcor}\label{cor:control}
For any diffeomorphism $\Psi$ of $\sph$ that is smoothly isotopic to the identity, any integer $r\geq0$, and any $\varepsilon>0$, there is a $\phi\in\WW$ such that $\|\phi-\Psi\|_{\mathrm{C}^r(\sph,\R^3)}<\varepsilon$.
\end{controlcor}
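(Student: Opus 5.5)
The plan is to derive Corollary~\ref{cor:control} from Theorem~\ref{thm:main} in three steps.

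\textbf{Step 1: reduce to time-dependent flows.} Since $\Psi$ is smoothly isotopic to the identity, we write $\Psi=\Phi^1$ for the flow from time $0$ to $1$ of a smooth time-dependent vector field $V_t$ on $\sph$.

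\textbf{Step 2: approximate by autonomous algebraic flows.} First, the fields $V_t$ can be approximated in $\mathrm{C}^{r+1}$, uniformly in $t$, by algebraic fields. Extend $V_t$ to a neighborhood of $\sph$. Use Weierstrass/Stone approximation of the components together with their derivatives (polynomials are dense in $\mathrm{C}^k$ on compact sets). Then project back to the tangent bundle by $v\mapsto v-(x\cdot v)x$, which preserves being algebraic. By Gronwall-type estimates for flows and their derivatives up to order $r$, the time-one map depends continuously in $\mathrm{C}^r$ on the vector field measured in $\mathrm{C}^{r+1}$. Next, subdivide $[0,1]$ into short intervals and freeze the field on each. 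The flow $\Phi^1$ is then $\mathrm{C}^r$-approximated by a finite composition $\Phi^{s_k}_{Y_k}\circ\cdots\circ\Phi^{s_1}_{Y_1}$ with each $Y_j\in\X(\sph)$. It therefore suffices to show that the $\mathrm{C}^r$-closure $\overline{\WW}$ of $\WW$ contains $\Phi^s_Y$ for every $Y\in\X(\sph)$ and $s\in\R$. Here we use that composition is continuous in $\mathrm{C}^r$ on a compact manifold, so $\overline{\WW}$ is a group.

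\textbf{Step 3: show that $\mathcal{A}:=\{Y\in\X(\sph):\Phi^s_Y\in\overline{\WW}\ \forall s\}$ is a Lie subalgebra containing $W_1,W_2$.} Theorem~\ref{thm:main} then gives $\mathcal A=\X(\sph)$, which completes the proof. By definition $W_1,W_2\in\mathcal A$, and $\mathcal A$ is closed under real scalar multiples. Closure under sums follows from the Trotter product formula
\[
\Phi^s_{Y+Z}=\lim_{n\to\infty}\bigl(\Phi^{s/n}_Y\circ\Phi^{s/n}_Z\bigr)^n .
\]
Closure under brackets follows from the commutator formula
\[
\Phi^{s}_{[Y,Z]}=\lim_{n\to\infty}\bigl(\Phi^{-\tau}_Z\circ\Phi^{-\tau}_Y\circ\Phi^{\tau}_Z\circ\Phi^{\tau}_Y\bigr)^{n^2},\qquad \tau=\sqrt{|s|}/n,
\]
up to sign and ordering conventions; negative $s$ is handled by swapping $Y$ and $Z$. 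Both limits must hold in $\mathrm{C}^r$.

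The main obstacle is this last point: justifying the Trotter and commutator limits in the $\mathrm{C}^r$ topology rather than merely pointwise. I would proceed as follows. For smooth fields on a compact manifold, the one-step error is a Taylor remainder: the composition $\Phi^{\tau}_Y\circ\Phi^{\tau}_Z$ agrees with $\Phi^{\tau}_{Y+Z}$ up to $O(\tau^2)$ in $\mathrm{C}^r$, and the four-fold commutator agrees with $\Phi^{\tau^2}_{[Y,Z]}$ up to $O(\tau^3)$ in $\mathrm{C}^r$. The constants depend on $\mathrm{C}^{r+2}$ (respectively $\mathrm{C}^{r+3}$) norms of the fields. A telescoping argument then bounds the accumulated error by the number of steps times the one-step error, using uniform $\mathrm{C}^r$ bounds on iterated compositions with Lipschitz constants $e^{C\tau}$. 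This gives total errors $O(1/n)$ for the Trotter formula and $O(n^2\cdot n^{-3})=O(1/n)$ for the commutator formula, as required. This is standard in geometric control, as in Agrachev--Sarychev~\cite{AS22}, and I would cite or adapt their lemma rather than redo it.
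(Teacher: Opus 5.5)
Your proposal is correct and follows essentially the paper's route. Like the paper, you pass to the closure of $\WW$, reduce the isotopy to flows of algebraic fields by freezing, polynomial approximation and Gronwall estimates, and then get those flows from Theorem~\ref{thm:main} via Trotter and commutator limits, which is what the paper imports as \cite[Lemma~9]{Real}. The only differences are that you approximate by algebraic fields before freezing rather than after, and that you sketch that lemma instead of citing it.

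One small correction to the sketch. In the commutator scheme with $n^2$ steps, you need per-step $\mathrm{C}^{r+1}$ (Lipschitz) bounds of order $e^{C\tau^2}$. These follow from the commutator being $O(\tau^2)$-close to the identity, or you can telescope against the exact flow of $[Y,Z]$ instead. Per-step constants $e^{C\tau}$ would compound to $e^{Cn\sqrt{|s|}}$ and destroy the $O(1/n)$ bound.
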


The remainder of the paper is organized as follows:
Section~\ref{sec:preliminaries} introduces preliminaries that are key to the proofs of the main results,  
Section~\ref{sec:PfThm} proves Theorem~\ref{thm:main}, and Section~\ref{sec:PfCor} proves Corollary~\ref{cor:control}.
Section~\ref{sec:conclusions} concludes the paper.

\section{Preliminaries}\label{sec:preliminaries}

In this section, we gather a few preliminary results that will be used in the proof of Theorem~\ref{thm:main}. 

\subsection{Polynomial functions and spherical harmonics}
We first note that a polynomial vanishes on $\sph$ if and only if it is a multiple of $x\cdot x-1$.
To see this, divide such a polynomial by $x\cdot x-1$, which is monic in $x_3$; the remainder has the form $a(x_1,x_2)x_3+b(x_1,x_2)$.
Evaluating at $x_3=\pm\sqrt{1-x_1^2-x_2^2}$ shows that $a$ and $b$ vanish on the open disk $x_1^2+x_2^2<1$, so both are zero.
It follows that the algebra of polynomial functions on $\sph$ is $\OO:=\R[x_1,x_2,x_3]/(x\cdot x-1)$.
The fields in~\eqref{eq:fields} depend only on the restriction of $f$ to $\sph$.
Indeed, replacing $f$ by $f+(x\cdot x-1)h$ changes $\nabla f$ on $\sph$ by $2hx$, which contributes nothing to $X_f$ because $x\times x=0$, and nothing to $G_f$ because $x\cdot x=1$.
We may therefore write $X_f$ and $G_f$ for $f\in\OO$.

By the definition of algebraic vector fields,
\begin{equation}\label{eq:tangent}
 \X(\sph)=\{U\in\OO^3\mid x\cdot U=0\}.
\end{equation}
For a smooth vector field $U$ on $\sph$ and a smooth function $f$, we write $U(f)$ for the derivative of $f$ along $U$, and we adopt the convention $[U,V](f)=U(V(f))-V(U(f))$.

For each $k\geq0$, let $H_k$ be the space of restrictions to $\sph$ of homogeneous polynomials of degree $k$ that are annihilated by the Euclidean Laplacian $\Delta:= \sum_{i = 1}^3 \frac{\partial^2}{\partial x_i^2}$.
We recall the spherical-harmonic decomposition of $\OO$ and the eigenvalues of the spherical Laplacian $\Delta_{\sph}$ (see~\cite[Theorems~1.1.2 and~1.1.3, Corollary~1.1.4, and Theorem~1.4.5]{DX13}):
\begin{equation}\label{eq:harmonics}
 \begin{aligned}
 \OO&=\bigoplus_{k\geq0}H_k,\qquad \dim H_k=2k+1,\\
 \Delta_{\sph}f&=-k(k+1)f\qquad \mbox{for any } f\in H_k.
 \end{aligned}
\end{equation}
Note that each polynomial function has only finitely many nonzero components in this decomposition.
The space $H_0$ consists of the constants, and $H_1=\operatorname{span}_{\R}\{x_1,x_2,x_3\}$.
For $R\in\mathrm{SO}(3)$ and $f\in H_k$, the composition $f\circ R$ is again an element of $H_k$; this holds because $\Delta(p\circ R)=(\Delta p)\circ R$ for every polynomial $p$.
Moreover, $H_k$ is irreducible, meaning that if a subspace $E\subseteq H_k$ contains $f\circ R$ for all $f\in E$ and $R\in\mathrm{SO}(3)$, then $E=\{0\}$ or $E=H_k$; see~\cite[Theorem~1.7.2]{DX13}.
This irreducibility is used only in the proof of Proposition~\ref{prop:criterion}.

\subsection{Hamiltonian and gradient fields}
We write $X_{H_k}:=\{X_f\mid f\in H_k\}$ 
and $G_{H_k}:=\{G_f\mid f\in H_k\}$. 
The space $X_{H_1}=\{x\times a\mid a\in\R^3\}$ consists of the rotation fields.
For $R\in\mathrm{SO}(3)$, the chain rule and the identity $R(a\times b)=Ra\times Rb$ imply that $$X_{f\circ R}(x)=R^\top X_f(Rx) \quad \mbox{and} \quad G_{f\circ R}(x)=R^\top G_f(Rx).$$ In words, rotating $f$ rotates $X_f$ and $G_f$.
For $k\ge1$, the maps $f\mapsto X_f$ and $f\mapsto G_f$ are injective on $H_k$; this holds because $X_f=0$ or $G_f=0$ forces $f$ to be constant.

Let $\omega_x(a,b):=x\cdot(a\times b)$ be the standard area form on $\sph$, and let $\dv$ denote divergence with respect to $\omega$.
The field $X_f$ is Hamiltonian with respect to $\omega$, with the convention $\iota_{X_f}\omega=-df$.
For a smooth vector field $U$ on $\sph$, we define its scalar surface curl by $\curl U:=-\dv(x\times U)$.
The two families in~\eqref{eq:fields} then satisfy
\begin{equation}\label{eq:divergence}
 \begin{aligned}
 \dv X_f&=0,&\dv G_f&=\Delta_{\sph}f,\\
 \curl X_f&=\Delta_{\sph}f,&\curl G_f&=0.
 \end{aligned}
\end{equation}
The first identity holds because, by Cartan's formula, the Lie derivative of $\omega$ along $X_f$ is $d\iota_{X_f}\omega=-d(df)=0$; the second is the definition of the spherical Laplacian.
The curl identities follow from $x\times X_f=-G_f$ and $x\times G_f=X_f$.

The next proposition, a polynomial form of the Helmholtz decomposition, shows that every algebraic vector field splits into the two families.

\begin{proposition}\label{prop:decomposition}
Every algebraic vector field $U$ on $\sph$ has a unique finite decomposition
\begin{equation}\label{eq:XplusG}
 U=\sum_{k\geq1}\bigl(X_{p_k}+G_{q_k}\bigr),\qquad p_k,q_k\in H_k.
\end{equation}
Consequently,
\begin{equation}\label{eq:XplusGspace}
 \begin{aligned}
 \X(\sph)&=\bigoplus_{k\geq1}\bigl(X_{H_k}\oplus G_{H_k}\bigr),\\
 \ker\dv&=\bigoplus_{k\geq1}X_{H_k}.
 \end{aligned}
\end{equation}
\end{proposition}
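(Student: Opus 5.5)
Given $U\in\X(\sph)$, I will find $p,q\in\OO$ with $U=X_p+G_q$ by solving Poisson equations on each harmonic component. Then I will show uniqueness by applying $\dv$ and $\curl$.

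\emph{Existence.} The plan is to reduce to the algebraic identity
\[
U = x\times(U\times x) = G_{q}+X_{p}
\quad\text{for suitable polynomial } p,q,
\]
but a direct route is cleaner. First I show that $\dv U$ and $\curl U$ lie in $\OO$ and have zero $H_0$-component. They lie in $\OO$ because, for a tangent polynomial field $U\in\OO^3$, the surface divergence is $\dv U=\nabla\cdot\tilde U - x\cdot(D\tilde U)x$ for any polynomial extension $\tilde U$, and this is again a polynomial on $\sph$; similarly, $\curl U=-\dv(x\times U)$ with $x\times U\in\X(\sph)$. Their $H_0$-components vanish because by the divergence theorem their integrals over $\sph$ are zero, and $H_k\perp H_0$ in $L^2(\sph)$ for $k\ge1$.

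Write $\dv U=\sum_{k\ge1}a_k$ and $\curl U=\sum_{k\ge1}b_k$ with $a_k,b_k\in H_k$, finitely many nonzero. Set $q_k:=-a_k/(k(k+1))$ and $p_k:=-b_k/(k(k+1))$. By \eqref{eq:harmonics} and \eqref{eq:divergence}, the field $V:=U-\sum_k(X_{p_k}+G_{q_k})\in\X(\sph)$ satisfies $\dv V=0$ and $\curl V=0$.

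\emph{Key step: a tangent field with $\dv V=\curl V=0$ vanishes.} This is where I expect the main obstacle, since it is a Hodge-theoretic fact that I must phrase concretely. Since $\curl V=-\dv(x\times V)=0$, the $1$-form $\iota_{x\times V}\omega$ is closed, so $\iota_V\omega$ is closed as well; up to the rotation by $x\times$, the form $\iota_{x\times V}\omega$ is the metric dual of $V$. Because $H^1_{dR}(\sph)=0$, the metric dual $V^\flat$ equals $dg$ for a smooth $g$, so $V=\nabla_{\sph}g$. Then $\dv V=\Delta_{\sph}g=0$, and integrating $g\,\Delta_{\sph}g$ over $\sph$ gives $\int|\nabla g|^2=0$. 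Hence $V=0$. This gives existence with $p=\sum p_k$ and $q=\sum q_k$.

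\emph{Uniqueness.} If $\sum_k(X_{p_k}+G_{q_k})=0$, apply $\dv$ and $\curl$ using \eqref{eq:divergence}. This gives $\sum_k k(k+1)q_k=0$ and $\sum_k k(k+1)p_k=0$. Directness of \eqref{eq:harmonics} forces all $p_k,q_k$ to vanish, so the sums in \eqref{eq:XplusGspace} are direct; the injectivity of $f\mapsto X_f$ and $f\mapsto G_f$ on $H_k$ gives each summand the expected dimension.

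\emph{The kernel of $\dv$.} For $U\in\ker\dv$, the construction above gives $q_k=0$, so $\ker\dv=\bigoplus_k X_{H_k}$. The reverse inclusion follows from $\dv X_f=0$.
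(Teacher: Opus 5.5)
Your proof is correct. Your uniqueness argument matches the paper's: apply $\dv$ and $\curl$ and read off harmonic components. Your existence argument, however, takes a genuinely different route.

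The paper solves nothing for existence; it counts dimensions. It shows that $U\mapsto x\cdot U$ maps $\OO_{\le k}^3$ onto $\OO_{\le k+1}$, so $\mathfrak X_{\le k}:=\X(\sph)\cap\OO_{\le k}^3$ has dimension $3(k+1)^2-(k+2)^2=2k^2+2k-1$. This equals the dimension of the already-direct subspace $\bigoplus_{j\le k}X_{H_j}\oplus\bigoplus_{j\le k-1}G_{H_j}$ sitting inside it, so the two coincide.

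You instead solve $\Delta_{\sph}q=\dv U$ and $\Delta_{\sph}p=\curl U$ degree by degree, after checking that $\dv U,\curl U\in\OO$ have zero mean. You then kill the remainder $V$ by a Hodge-type argument. The condition $\curl V=0$ makes $V^\flat=-\iota_{x\times V}\omega$ closed, hence exact because $H^1_{dR}(\sph)=0$. The condition $\dv V=0$ then makes the potential harmonic, hence constant. Your aside that $\iota_V\omega$ is closed actually follows from $\dv V=0$, not from $\curl V=0$, but you never use it.

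What your route buys is constructiveness: it gives $p_k$ and $q_k$ explicitly from the harmonic components of $\curl U$ and $\dv U$. It also explains conceptually why only two families appear, namely that $\sph$ carries no harmonic $1$-forms. The price is importing de Rham cohomology, Green's identity and the divergence theorem.

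The paper's count uses only finite-dimensional linear algebra on top of \eqref{eq:harmonics}. It also yields a degree filtration that your argument does not directly give: a field in $\OO_{\le k}^3$ has $p_j=0$ for $j>k$ and $q_j=0$ for $j\ge k$.
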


\begin{proof}
For uniqueness, apply both $\curl$ and $\dv$ to~\eqref{eq:XplusG}.
By~\eqref{eq:harmonics} and~\eqref{eq:divergence}, the degree-$k$ harmonic components of $\curl U$ and $\dv U$ are ${-k(k+1)p_k}$ and ${-k(k+1)q_k}$, which determine all $p_k$ and $q_k$.
Hence the sum of the spaces $X_{H_k}$ and $G_{H_k}$, $k\geq1$, is direct, and each of these spaces has dimension $2k+1$.

For existence, we count dimensions.
Let $\OO_{\leq k}$ be the space of restrictions to $\sph$ of polynomials of degree at most $k$.
Since a polynomial of degree at most $k$ that vanishes on $\sph$ has the form $(x\cdot x-1)h$ with $\deg h\leq k-2$, we have $\dim\OO_{\leq k}=\tbinom{k+3}{3}-\tbinom{k+1}{3}=(k+1)^2$.
For $k\geq1$, the map $U\mapsto x\cdot U$ sends $\OO_{\leq k}^3$ onto $\OO_{\leq k+1}$.
Indeed, the constant $1$ is the image of $x$, and a homogeneous polynomial $f$ of degree $1\leq j\leq k+1$ satisfies $x\cdot\nabla f=jf$, so it is the image of $\nabla f/j\in\OO_{\leq k}^3$.
By~\eqref{eq:tangent}, the kernel of this map is $\mathfrak X_{\leq k}:=\X(\sph)\cap\OO_{\leq k}^3$, which therefore has dimension $3(k+1)^2-(k+2)^2=2k^2+2k-1$.
On the other hand, for $f\in H_j$, the fields $X_f$ and $G_f$ computed from its homogeneous representative have coefficients of degree at most $j$ and $j+1$, respectively, so
$$
 \bigoplus_{j=1}^{k}X_{H_j}\oplus\bigoplus_{j=1}^{k-1}G_{H_j}\subseteq\mathfrak X_{\leq k}.
$$
The left side has dimension $(k+1)^2-1+k^2-1=2k^2+2k-1$, so equality holds.
Since every algebraic vector field lies in some $\mathfrak X_{\leq k}$, this proves~\eqref{eq:XplusG} and the first line of~\eqref{eq:XplusGspace}.
Finally, $\dv U=-\sum_k k(k+1)q_k$ vanishes exactly when all $q_k=0$, which proves the second line.
\end{proof}

We now compute brackets.
For $f,g\in\OO$, set $\{f,g\}:=x\cdot(\nabla f\times\nabla g)=X_f(g)$. 
This is the Poisson bracket on $\sph$ associated with $\omega$, since $\{f,g\}=\omega(X_f,X_g)$.
It is well defined on $\OO$, and $\{x_1,x_2\}=x_3$ together with its cyclic permutations.
Since the bracket satisfies the Leibniz rule in each argument, the Jacobi identity needs to be checked only on $x_1,x_2,x_3$, where it follows from the cyclic relations. Hence $[X_f,X_g](h)=\{f,\{g,h\}\}-\{g,\{f,h\}\}=\{\{f,g\},h\}$ for every $h\in\OO$, that is,
\begin{equation}\label{eq:XX}
 [X_f,X_g]=X_{\{f,g\}}.
\end{equation}
In particular, the Hamiltonian fields $X_f$, $f\in\OO$, form a Lie subalgebra of $\X(\sph)$, and by~\eqref{eq:XplusGspace} this subalgebra is $\ker\dv$.

For $\ell\in H_1$, the flow of $X_\ell$ consists of rotations $R_t$.
Differentiating $G_{f\circ R_t}(x)=R_t^\top G_f(R_tx)$ at $t=0$, we obtain $G_{\{\ell,f\}}$ on the left and $[X_\ell,G_f]$ on the right, so
\begin{equation}\label{eq:XG}
 [X_{\ell},G_f]=G_{\{\ell,f\}}.
\end{equation}
We also have
\begin{equation}\label{eq:GG}
 [G_f,G_g]=-X_{\{f,g\}},\quad \mbox{for } f,g\in H_1.
\end{equation}
To see this, write $f=a\cdot x$ and $g=b\cdot x$.
Then $G_f=a-fx$ and $G_g=b-gx$, and a direct computation gives $[G_f,G_g]=fb-ga=-x\times(a\times b)=-X_{\{f,g\}}$.

\section{Proof of Theorem~\ref{thm:main}}\label{sec:PfThm}

The proof has two steps.
In the first step, we show that if a set contains a basis of the space of rotation fields, together with two other low-degree vector fields, then the Lie algebra generated by the set is $\X(\sph)$. This is done in
Proposition~\ref{prop:criterion}. Then, in the second step,  we show that the Lie algebra generated by $\{W_1, W_2\}$ contains that set. 

\subsection{On finiteness of generators}
The main result of this subsection is the following proposition, which in particular implies that $\X(S^2)$ is finitely generated. 

\begin{proposition}\label{prop:criterion}
Let $\LL\subseteq\X(\sph)$ be a Lie subalgebra containing all rotation fields, a nonzero field in $X_{H_2}$, and a nonzero field in $G_{H_1}$.
Then, $\LL=\X(\sph)$.
\end{proposition}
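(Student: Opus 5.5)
The plan is to use $\mathrm{SO}(3)$-invariance to get whole isotypic blocks, and then climb in degree by bracketing.

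\textbf{Step 1: invariant subspaces are closed under rotation.} Since $\LL$ contains all rotation fields $X_\ell$ with $\ell\in H_1$, it is invariant under $\ad X_\ell$. Each finite-dimensional subspace $\bigoplus_{j\le k}(X_{H_j}\oplus G_{H_j})$ is preserved by $\ad X_\ell$, by~\eqref{eq:XX} and~\eqref{eq:XG} together with $\{H_1,H_k\}\subseteq H_k$. The last inclusion holds because $\{\ell,\cdot\}$ is the infinitesimal generator of $f\mapsto f\circ R_t$. Exponentiating inside a finite-dimensional invariant subspace then shows that $\LL$ is closed under the action $U\mapsto R^\top U(R\,\cdot)$ of $\mathrm{SO}(3)$.

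Next, $\LL\cap X_{H_k}$ and $\LL\cap G_{H_k}$ correspond, under the injective equivariant maps $f\mapsto X_f$ and $f\mapsto G_f$, to rotation-invariant subspaces of $H_k$. By irreducibility, each such intersection is either $0$ or everything. From the hypotheses we therefore get $X_{H_1},X_{H_2},G_{H_1}\subseteq\LL$.

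A second consequence is that isotypic components of elements of $\LL$ lie in $\LL$. The spaces $X_{H_k}$ and $G_{H_k}$ sit in eigenspaces of the Casimir $\sum_i(\ad X_{x_i})^2$, with eigenvalue $-k(k+1)$ on degree $k$. Applying polynomials in this Casimir to an element of $\LL$ therefore extracts its degree-$k$ part, namely $X_{p_k}+G_{q_k}$.

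\textbf{Step 2: climbing in degree.} I would show by induction that $X_{H_k},G_{H_k}\subseteq\LL$ for every $k$. This gives $\LL=\X(\sph)$ by Proposition~\ref{prop:decomposition}.
\begin{itemize}
\item For $X_{H_{k+1}}$: bracket $X_{H_2}$ with $X_{H_k}$. By~\eqref{eq:XX} this yields $X_{\{f,g\}}$, and $\{H_2,H_k\}$ has a nonzero $H_{k+1}$ component. For instance, with $f=x_1x_2$ and $g=\mathrm{Re}(x_1+ix_2)^k$, compute $\{f,g\}$ explicitly. Extracting that component by Step 1 and applying irreducibility gives $X_{H_{k+1}}\subseteq\LL$.
\item For the gradient fields: bracket a gradient field with a Hamiltonian one. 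Compute $[G_{x_3},X_g]$ for a single convenient $g\in H_k$, say $g=\mathrm{Re}(x_1+ix_2)^k$. Its divergence is nonzero and has a component in $H_{k+1}$ (or $H_{k-1}$). Hence the $G_{H_{k+1}}$ component, extracted via the Casimir, is nonzero, and irreducibility gives all of $G_{H_{k+1}}$. The same computation, with $k=1$ and $g=x_1$, already yields $G_{H_2}$ as a consistency check.
\end{itemize}

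\textbf{Main obstacle.} The hard part is the explicit verification that the relevant bracket components are nonzero at every degree. For the $X$-part, the leading homogeneous part of $\{x_1x_2,(x_1+ix_2)^k\}$ is a nonzero multiple of $(x_1+ix_2)^k x_3$ up to harmonic projection. The highest-weight vector $(x_1+ix_2)^{k+1}$ cannot be reached this way, so I would pair weights differently, using $(x_1+ix_2)^2$ with $(x_1+ix_2)^{k-1}x_3$. A weight argument (the degree-$(k+1)$ highest-weight vector is $(x_1+ix_2)^{k+1}$) makes this check manageable. For the $G$-part, I would compute $\dv[G_{x_3},X_g]$ directly. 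A useful identity is that the divergence of a commutator is $G_{x_3}(\dv X_g)-X_g(\dv G_{x_3})=0-X_g(-2x_3)=2\{g,x_3\}$, which lies in $H_k$. The $H_k$ component is therefore nonzero whenever $\{g,x_3\}\neq0$, so this step actually gives $G_{H_k}$ rather than $G_{H_{k+1}}$, given $X_{H_k}\subseteq\LL$. The induction then becomes: first get $X_{H_k}$ for all $k$ from the first bullet, then get $G_{H_k}$ from $[G_{x_3},X_g]$ with $g\in H_k$ and $\{g,x_3\}\ne0$, extracting the degree-$k$ gradient part as in Step 1.
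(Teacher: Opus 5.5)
Your final plan is correct and is essentially the paper's proof. It uses rotation invariance plus irreducibility of $H_k$ to fill whole blocks $X_{H_k}$ and $G_{H_k}$. It then uses the exact bracket $\{\zeta^2,x_3\zeta^{k-1}\}=2i\zeta^{k+1}$ (with $\zeta=x_1+ix_2$, taking real and imaginary parts) to climb through the Hamiltonian degrees. Finally, once all Hamiltonian fields lie in $\LL$, the identity $\dv[G_{x_3},X_g]=2\{g,x_3\}\in H_k$ produces a nonzero element of $G_{H_k}\cap\LL$.

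Your Casimir projection is valid but not needed. The paper simply notes that $2i\zeta^{k+1}$ already lies in $H_{k+1}$, and that $[G_{x_3},X_g]$ minus the appropriate multiple of $G_{\{g,x_3\}}$ is divergence-free, hence Hamiltonian and already in $\LL$.

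Some first-draft claims are false and should be dropped; none of them is used in your corrected argument:
\begin{itemize}
\item $[G_{x_3},X_g]$ does not yield $G_{H_{k+1}}$.
\item Taking $g=x_1$ does not yield $G_{H_2}$; in fact $[G_{x_3},X_{x_1}]=G_{x_2}\in G_{H_1}$.
\item $\{x_1x_2,\zeta^k\}$ is not a multiple of $x_3\zeta^k$; it equals $-k\,x_3\bar\zeta\zeta^{k-1}$.
\end{itemize}
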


\begin{proof}
Fix $k\ge1$ and let $E:=\{f\in H_k\mid X_f\in\LL\}$.
For $\ell\in H_1$, the flow $R_t$ of $X_\ell$ consists of rotations, and for $f\in H_k$ the function $g(t):=f\circ R_t$ solves the linear equation $\dot g=\{\ell,g\}$ on $H_k$.
If $f\in E$, then~\eqref{eq:XX} gives $X_{\{\ell,f\}}=[X_\ell,X_f]\in\LL$, so $E$ is invariant under the map $f\mapsto\{\ell,f\}$.
It follows that $f\circ R_t\in E$ for all $f\in E$ and $t\in\R$.
Since every rotation is the time-one flow of some rotation field, $E$ contains $f\circ R$ for all $f\in E$ and $R\in\mathrm{SO}(3)$, and irreducibility gives $E=\{0\}$ or $E=H_k$.
The same argument, with~\eqref{eq:XG} in place of~\eqref{eq:XX}, applies to $\{f\in H_k\mid G_f\in\LL\}$.
We conclude that, for each $k\ge1$, if $\LL$ contains a nonzero field in $X_{H_k}$ or $G_{H_k}$, then it contains that whole space.
In particular, the hypotheses give $X_{H_2}\subseteq\LL$ and $G_{H_1}\subseteq\LL$.

We use complex-valued polynomials only to shorten the following calculations.
For real polynomials $p,q$, we set $X_{p+iq}:=X_p+iX_q$ and $G_{p+iq}:=G_p+iG_q$, and we extend the bracket complex-bilinearly and the divergence complex-linearly.
Note that $\LL+i\LL$ is closed under brackets, and that a complex field belongs to it exactly when its real and imaginary parts belong to $\LL$.
Put
\begin{equation}\label{eq:zeta}
 \zeta:=x_1+ix_2.
\end{equation}
Then
\begin{equation}\label{eq:zetabracket}
 \{x_3,\zeta\}=-i\zeta.
\end{equation}
The polynomials $\zeta^k$ and $x_3\zeta^{k-1}$ are homogeneous and harmonic, since $\nabla\zeta\cdot\nabla\zeta=\nabla\zeta\cdot\nabla x_3=0$, so their real and imaginary parts restrict to elements of $H_k$ for every $k\geq1$.
Moreover, $\zeta=e^{i\theta}$ on the equator, so $\operatorname{Im}(\zeta^k)=\sin(k\theta)$ is nonconstant.

We now show by induction that $X_{H_k}\subseteq\LL$ for every $k\geq1$; the cases $k=1$ and $k=2$ are already known.
Suppose that $X_{H_k}\subseteq\LL$ for some $k\geq2$.
Then the real and imaginary parts of $X_{\zeta^2}$ and $X_{x_3\zeta^{k-1}}$ lie in $\LL$, so their bracket belongs to $\LL+i\LL$.
Since $\{\zeta^2,x_3\zeta^{k-1}\}=2\zeta\{\zeta,x_3\}\zeta^{k-1}=2i\zeta^{k+1}$, equation~\eqref{eq:XX} gives
\begin{equation}\label{eq:Hrecurrence}
 [X_{\zeta^2},X_{x_3\zeta^{k-1}}]=2iX_{\zeta^{k+1}}.
\end{equation}
Taking real parts of~\eqref{eq:Hrecurrence}, we obtain $-2X_{\operatorname{Im}(\zeta^{k+1})}\in\LL$, and hence $X_{\operatorname{Im}(\zeta^{k+1})}\in\LL$.
This is a nonzero field in $X_{H_{k+1}}$, so $X_{H_{k+1}}\subseteq\LL$ by the first paragraph.
This completes the induction, and by~\eqref{eq:XplusGspace} every Hamiltonian algebraic vector field lies in $\LL$.

We next recover the gradient fields.
For smooth vector fields $U,V$ on $\sph$, we have $\dv[U,V]=U(\dv V)-V(\dv U)$.
Indeed, the Lie derivative of $\omega$ along $U$ is $(\dv U)\omega$, and the Lie derivative along $[U,V]$ is the commutator of the Lie derivatives along $U$ and $V$; applying this commutator to $\omega$, the terms $(\dv U)(\dv V)\omega$ cancel.
Since $\dv G_{x_3}=-2x_3$, $\dv X_{\zeta^k}=0$, and $\{\zeta^k,x_3\}=ik\zeta^k$ by~\eqref{eq:zetabracket}, we obtain, for $k\geq1$,
\begin{equation}
 \dv[X_{\zeta^k},G_{x_3}]=-2X_{\zeta^k}(x_3)=-2ik\zeta^k.
\end{equation}
Now, set $Y_k:=[X_{\zeta^k},G_{x_3}]-\tfrac{2i}{k+1}G_{\zeta^k}$.
By~\eqref{eq:harmonics} and~\eqref{eq:divergence}, applied to the real and imaginary parts of $\zeta^k$, we have $\dv G_{\zeta^k}=-k(k+1)\zeta^k$, and therefore $\dv Y_k=0$.
The real and imaginary parts of $Y_k$ are thus divergence-free algebraic vector fields, so they are Hamiltonian by~\eqref{eq:XplusGspace} and lie in $\LL$ by the induction above.
Hence $Y_k\in\LL+i\LL$.
Since the bracket $[X_{\zeta^k},G_{x_3}]$ also belongs to $\LL+i\LL$, subtracting $Y_k$ and dividing by $2i/(k+1)$ gives $G_{\zeta^k}\in\LL+i\LL$.
Its imaginary part $G_{\operatorname{Im}(\zeta^k)}$ is a nonzero field in $G_{H_k}\cap\LL$, so $G_{H_k}\subseteq\LL$ by the first paragraph.
As this holds for every $k\geq1$, Proposition~\ref{prop:decomposition} gives $\LL=\X(\sph)$.
\end{proof}

\subsection{The two generators $W_1$ and $W_2$}

The vector fields $W_1$ and $X_{x_1}$ are the rotation fields about the $x_3$- and $x_1$-axes, respectively.
The quadratic term $X_{x_1^2-x_2^2}=x\times\operatorname{diag}(2,-2,0)x$ has the form $x\times Dx$ of Euler's rigid-body equation, and the gradient term $G_{x_3}=(0,0,1)-x_3x$ increases height away from the poles, since its $x_3$-component is $1-x_3^2$.
We first show that these three terms of $W_2$ can be obtained individually and then apply Proposition~\ref{prop:criterion} to establish Theorem~\ref{thm:main}.  
We have the following lemma. 

\begin{lemma}\label{lem:separation}
The Lie algebra $\Lie_\R\{W_1,W_2\}$ contains $X_{x_1}$, $X_{x_1^2-x_2^2}$, and $G_{x_3}$.
\end{lemma}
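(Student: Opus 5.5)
The plan is to separate the three summands of $W_2$ by repeatedly applying $\ad_{W_1}=[W_1,\cdot\,]$. The point is that $W_1=X_{x_3}$ is a rotation field, and its adjoint action acts on the three summands with different eigenvalues $0$, $\pm i$, $\pm 2i$. A Vandermonde-type linear combination of $W_2$, $\ad_{W_1}^2W_2$ and $\ad_{W_1}^4W_2$ should then isolate each summand.

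First, compute the action of $\ad_{W_1}$ on each summand using~\eqref{eq:XX} and~\eqref{eq:XG} with $\ell=x_3$.

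For $G_{x_3}$: $[X_{x_3},G_{x_3}]=G_{\{x_3,x_3\}}=0$, so this term is annihilated.

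For $X_{x_1}$: the cyclic relations give $\{x_3,x_1\}=x_2$ and $\{x_3,x_2\}=-x_1$. Hence $\ad_{W_1}X_{x_1}=X_{x_2}$ and $\ad_{W_1}^2X_{x_1}=-X_{x_1}$.

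For $q:=x_1^2-x_2^2$: the Leibniz rule gives $\{x_3,q\}=2x_1x_2+2x_2x_1=4x_1x_2$ and $\{x_3,x_1x_2\}=x_2^2-x_1^2$. Hence $\ad_{W_1}^2X_q=-4X_q$.

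Consequently
\[
\ad_{W_1}^2W_2=-X_{x_1}-4X_{q},\qquad \ad_{W_1}^4W_2=X_{x_1}+16X_{q},
\]
and both lie in $\Lie_\R\{W_1,W_2\}$.

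Next, solve the resulting linear system. Adding the two displayed fields gives $12X_q$, so $X_q\in\Lie_\R\{W_1,W_2\}$. Then $X_{x_1}=-\ad_{W_1}^2W_2-4X_q$ also lies in the algebra. Finally $G_{x_3}=W_2-X_{x_1}-X_q$ lies in it as well.

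There is no real obstacle here. The only care needed is getting the signs of the Poisson brackets right, via $\{x_1,x_2\}=x_3$ and its cyclic permutations, so that the eigenvalues $-1$ and $-4$ of $\ad_{W_1}^2$ come out distinct. That distinctness is exactly what makes the combination invertible.
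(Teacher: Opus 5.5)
Your proposal is correct and follows essentially the same route as the paper: both compute that $X_{x_1}$, $X_{x_1^2-x_2^2}$, and $G_{x_3}$ are eigenvectors of $\ad_{W_1}^2$ with the distinct eigenvalues $-1$, $-4$, $0$, and then isolate each summand by real linear combinations of $W_2$, $\ad_{W_1}^2W_2$, and $\ad_{W_1}^4W_2$. Your explicit elimination (adding the two displayed fields to get $12X_{x_1^2-x_2^2}$, then back-substituting) reproduces exactly the paper's formulas for $A$, $B$, and $C$.
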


\begin{proof}
Within this proof, let $$A:=X_{x_1}, \quad  B:=X_{x_1^2-x_2^2}, \quad \mbox{and} \quad C:=G_{x_3},$$ 
so that $W_2=A+B+C$. We write $\ad_{W_1}:=[W_1,\cdot]$.
Since $W_1=X_{x_3}$, equations~\eqref{eq:XX} and~\eqref{eq:XG} yield $$\ad_{W_1}A=X_{x_2}, \quad \ad_{W_1}B=4X_{x_1x_2}, \quad \mbox{and} \quad \ad_{W_1}C=0.$$ 
Applying $\ad_{W_1}$ once more, we obtain that 
$$\ad_{W_1}^2A=-A, \quad \ad_{W_1}^2B=-4B, \quad \mbox{and} \quad \ad^2_{W_1}C=0.$$
Thus, $A$, $B$, and $C$ are eigenvectors of $\ad_{W_1}^2$ with respect to the eigenvalues $-1$, $-4$, and $0$, which are pairwise distinct. 

Consequently, $A$, $B$, and $C$ can be obtained by applying polynomials in $\ad_{W_1}^2$ to $W_2$:
\begin{equation}
 \begin{aligned}
 A&=-\tfrac13\bigl(\ad_{W_1}^4W_2+4\ad_{W_1}^2W_2\bigr),\\
 B&=\tfrac1{12}\bigl(\ad_{W_1}^4W_2+\ad_{W_1}^2W_2\bigr),\\
 C&=\tfrac14\bigl(\ad_{W_1}^4W_2+5\ad_{W_1}^2W_2+4W_2\bigr).
 \end{aligned}
\end{equation}
Each right-hand side is a real linear combination of iterated brackets of $W_1$ and $W_2$, so all three fields $A$, $B$, and $C$ belong to $\Lie_\R\{W_1,W_2\}$.
\end{proof}

With the results above, we are now in the position to establish Theorem~\ref{thm:main}.

\begin{proof}[Proof of Theorem~\ref{thm:main}]
Let $\LL:=\Lie_{\R}\{W_1,W_2\}$.
By Lemma~\ref{lem:separation}, $X_{x_1}\in\LL$.
Since $[W_1,X_{x_1}]=X_{x_2}$ and $W_1=X_{x_3}$, every rotation field belongs to $\LL$.
The same lemma gives $X_{x_1^2-x_2^2}\in\LL$ and $G_{x_3}\in\LL$; these fields are nonzero and lie in $X_{H_2}$ and $G_{H_1}$, respectively, since $x_1^2-x_2^2$ is harmonic.
Thus all hypotheses of Proposition~\ref{prop:criterion} hold. We thus conclude that $\LL=\X(\sph)$.
\end{proof}

We end this section by having the following remark: 

\begin{remark}\normalfont
    Both generators $W_1$ and $W_2$ are complete because $\sph$ is compact. It is clear that two is the smallest possible number of generators, since a single vector field $U$ generates only the space $\R U$. 
    For this pair, each term of $W_2$ is necessary.
Deleting $G_{x_3}$ leaves only divergence-free fields.
Deleting $X_{x_1^2-x_2^2}$ keeps all brackets in the six-dimensional Lie algebra spanned by the rotation fields and the gradients of linear functions, by~\eqref{eq:XX}--\eqref{eq:GG}.
If $X_{x_1}$ is deleted, both remaining generators are invariant under the half-turn $(x_1,x_2,x_3)\mapsto(-x_1,-x_2,x_3)$, and so are all their brackets, whereas $X_{x_1}$ changes sign.
Thus the remaining generators cannot produce $X_{x_1}$.
\end{remark}

\section{Proof of Corollary~\ref{cor:control}}\label{sec:PfCor}

We first extend the flow estimate of~\cite[Lemma~8]{Real} to time-dependent vector fields.

\begin{lemma}\label{lem:estimates}
Let $A_t$ and $A_t^j$, $j\ge1$, be time-dependent vector fields on $\sph$ that are smooth in $x$ and piecewise continuous in $t\in[0,T]$ in every $\mathrm{C}^r$ norm, and let $P_A^t$ and $P_{A^j}^t$ denote their flows starting at the identity.
If $A_t^j\to A_t$ in every $\mathrm{C}^r$ norm, uniformly in $t$, then $P_{A^j}^t\to P_A^t$ in every $\mathrm{C}^r$ norm, uniformly in $t\in[0,T]$.
\end{lemma}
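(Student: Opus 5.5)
We treat this as a statement about non-autonomous ODEs on a compact manifold. We embed $\sph\subset\R^3$ and, using a smooth cutoff, extend every field to a compactly supported time-dependent field on $\R^3$; this extension is linear and bounded in each $\mathrm{C}^r$ norm. It preserves the uniform convergence $A^j_t\to A_t$, and the flows of the extended fields restrict to the original flows on $\sph$, which is invariant. Piecewise continuity in $t$ means we can split $[0,T]$ into finitely many intervals on which all fields are continuous in $t$. By the group property $P^t=P^{t}_{s}\circ P^{s}$, together with the fact that composition is continuous in the $\mathrm{C}^r$ topology on a compact set (for diffeomorphisms with uniformly bounded derivatives), it suffices to treat a single interval where everything is continuous in $t$. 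We assume, as the statement intends, that the partition is common to all $A^j$; if it is not, Carathéodory-type integral estimates below work verbatim, since only $\sup_t$ bounds and integrability are used.

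The argument then proceeds by induction on $r$, using Gronwall's inequality.

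\textbf{Case $r=0$.} We have
$$\tfrac{d}{dt}\bigl(P^t_{A^j}(x)-P^t_A(x)\bigr)=A^j_t(P^t_{A^j}x)-A^j_t(P^t_Ax)+A^j_t(P^t_Ax)-A_t(P^t_Ax).$$
The first difference is bounded by $L\,|P^t_{A^j}x-P^t_Ax|$, where $L$ is a uniform-in-$j$ Lipschitz constant; such an $L$ exists because $A^j\to A$ in $\mathrm{C}^1$ uniformly in $t$, so $\sup_{j,t}\|A^j_t\|_{\mathrm{C}^1}<\infty$. The second difference is at most $\sup_t\|A^j_t-A_t\|_{\mathrm{C}^0}$. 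Gronwall then gives
$$\sup_{t}\|P_{A^j}^t-P_A^t\|_{\mathrm{C}^0}\le T e^{LT}\sup_t\|A^j_t-A_t\|_{\mathrm{C}^0}\to0.$$

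\textbf{Case $r=1$.} The derivative $D_t^j:=DP^t_{A^j}$ satisfies the variational equation
$$\dot D^j=DA^j_t(P^t_{A^j})\,D^j,\qquad D^j_0=I.$$
First, $D^j$ is uniformly bounded, by Gronwall applied with the uniform $\mathrm{C}^1$ bound. Subtracting the corresponding equation for $A$ gives
$$\tfrac{d}{dt}(D^j-D)=DA^j_t(P^t_{A^j})(D^j-D)+\bigl[DA^j_t(P^t_{A^j})-DA_t(P^t_A)\bigr]D.$$
The bracketed term tends to $0$ uniformly: split it as $DA^j_t(P_{A^j})-DA^j_t(P_A)$, which is at most $\|A^j\|_{\mathrm{C}^2}\,|P_{A^j}-P_A|$ and tends to $0$ by the $r=0$ case, plus $DA^j_t(P_A)-DA_t(P_A)$, which tends to $0$ by $\mathrm{C}^1$ convergence. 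Gronwall then yields $D^j\to D$ uniformly.

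\textbf{Higher $r$.} The $k$-th derivative $D^kP^t_{A^j}$ satisfies a linear equation
$$\dot Z=DA^j_t(P^t_{A^j})\,Z+F^j_k,$$
where $F^j_k$ is a polynomial (by the Faà di Bruno formula) in $D^iA^j_t(P^t_{A^j})$ for $i\le k$ and in $D^mP^t_{A^j}$ for $m<k$. By the induction hypothesis, together with uniform $\mathrm{C}^{k+1}$ bounds on $A^j$ to control composition with the converging flows, $F^j_k\to F_k$ uniformly. The same Gronwall argument closes the induction.

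The main obstacle is bookkeeping rather than ideas. It requires uniform-in-$j$ bounds on the derivatives of the flows, obtained by a preliminary Gronwall step at each order, and it requires handling the composition $D^iA^j_t\circ P_{A^j}$. For the latter we spend one extra derivative: to get $\mathrm{C}^r$ convergence of the flows we use $\mathrm{C}^{r+1}$ convergence of the fields, which is available since convergence is assumed in every $\mathrm{C}^r$ norm. Finally, the piecewise-continuity caveat is handled by noting that all estimates are integral ones, so finitely many discontinuities in $t$ are harmless.
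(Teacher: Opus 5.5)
Your proposal is correct and is essentially the paper's argument: the flow and its derivatives up to order $r$ satisfy the variational equations, the right-hand sides for $A^j$ converge to those for $A$ with $j$-uniform Lipschitz bounds, and Gronwall gives the convergence. The paper simply treats the full $r$-jet as one finite-dimensional ODE, after extending fields radially homogeneously rather than by a cutoff, instead of inducting on the order, and in its application the approximating fields have $j$-dependent partitions, so your common-partition reduction is unavailable there; your integral form of Gronwall, which uses only boundedness and measurability in $t$, already covers that case.
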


\begin{proof}
Fix $r$, and extend maps and vector fields to a neighborhood of $\sph$ by $F(\rho x):=\rho F(x)$ and $U(\rho x):=\rho U(x)$.
The flow $P_A^t$, together with its derivatives up to order $r$, solves the variational equations.
These form an ordinary differential equation on a finite-dimensional space whose right-hand side is built from the derivatives of $A_t$ up to order $r$; this is the jet prolongation used in~\cite[Lemma~8]{Real}.
On a compact neighborhood of the solutions for $A$, the right-hand sides for $A^j$ converge uniformly to the one for $A$, with Lipschitz constants bounded in $j$.
Gronwall's inequality therefore gives uniform convergence of the solutions starting at the derivatives of the identity, which is the claimed $\mathrm{C}^r$ convergence.
\end{proof}

With the lemma above, we establish Corollary~\ref{cor:control}.

\begin{proof}[Proof of Corollary~\ref{cor:control}]
Let $\mathcal G$ be the closure of $\WW$ in the group of smooth diffeomorphisms of $\sph$, with respect to convergence in every $\mathrm{C}^r$ norm.
Since composition and inversion are continuous for this convergence, $\mathcal G$ is a closed subgroup.
We show in turn that $\mathcal G$ contains the flows of algebraic vector fields, the flows of smooth vector fields, and $\Psi$.

Let $U$ be an algebraic vector field and $t\in\R$.
By Theorem~\ref{thm:main}, $tU\in\Lie_\R\{W_1,W_2\}$.
It then follows from~\cite[Lemma~9]{Real}, whose proof uses only that the generators are complete, that $\Phi_U^t$ is a limit of elements of $\WW$ in every $\mathrm{C}^r$ norm.
Thus $\Phi_U^t\in\mathcal G$.

Let $V$ be a smooth vector field on $\sph$.
Polynomial approximation with derivatives~\cite[Lemma~6.9]{AS22}, applied to the three coordinate components, gives polynomial maps $\widetilde U_j$ with $\|\widetilde U_j-V\|_{\mathrm{C}^j}<1/j$.
The algebraic vector fields $U_j(x):=\widetilde U_j(x)-(x\cdot\widetilde U_j(x))x$ converge to $V$ in every $\mathrm{C}^r$ norm, so Lemma~\ref{lem:estimates}, applied to $\pm U_j$ and $\pm V$, gives $\Phi_{U_j}^t\to\Phi_V^t$ for every $t$.
  Since $\Phi_{U_j}^t\in\mathcal G$ by the previous step, we obtain $\Phi_V^t\in\mathcal G$.

Finally, let $\Psi_t$ be a smooth isotopy from the identity to $\Psi$, and let $A_t:=(\partial_t\Psi_t)\circ\Psi_t^{-1}$ be its velocity, so that $\Psi=P_A^1$.
Freezing $A_t$ at the left endpoints of partitions of $[0,1]$ with mesh tending to zero gives piecewise-constant fields that converge to $A_t$ in every $\mathrm{C}^r$ norm, uniformly in $t$.
Their flows at time $1$ are finite compositions of flows of smooth vector fields, so they lie in $\mathcal G$ by the previous step, and they converge to $\Psi$ by Lemma~\ref{lem:estimates}.
Since $\mathcal G$ is closed, $\Psi\in\mathcal G$, which is the statement of Corollary~\ref{cor:control}.
\end{proof}

\section{Conclusions and Outlooks}\label{sec:conclusions}
In this paper, we have shown that the Lie algebra of algebraic vector fields on $\sph$ can be generated by the two vector fields $W_1$ and $W_2$, which we exhibit explicitly in~\eqref{eq:generators}.
In a forthcoming companion paper, we generalize the main result of the paper by showing that, for any $n\geq 3$, the Lie algebra of algebraic vector fields on $S^{n-1}$ is also $2$-generated.

\printbibliography

\end{document}